\theoremstyle{definition}
\newtheorem{theorem}{Theorem}[section]
\newtheorem{lemma}[theorem]{Lemma}
\newcommand{\vol}{\text{vol}}
\title{On average hitting time and Kemeny's constant for weighted trees}
\author{Ji Zeng\thanks{Department of Mathematics, University of California at San Diego, La Jolla, CA, 92093 USA. Partly supported by NSF grant DMS-1800746. Email: {\tt jzeng@ucsd.edu}.} }
\date{}
\begin{document}

\maketitle

\begin{abstract}
For a connected graph $G$, the average hitting time $\alpha(G)$ and the Kemeny's constant $\kappa(G)$ are two similar quantities, both measuring the time for the random walk on $G$ to travel between two randomly chosen vertices. We prove that, among all weighted trees whose edge weights form a fixed multiset, $\alpha$ is maximized by a special type of ``polarized'' paths and is minimized by a unique weighted star graph. We also obtain a similar characterization of the $\kappa$-maximizing and $\kappa$-minimizing elements among such a collection of weighted trees. Our proofs are based on the forest formulas for $\alpha$ and $\kappa$.
\end{abstract}

\section{Introduction}\label{section_intro}

In this paper, we consider random walks on graphs whose edges are assigned positive weights. When the random walk on a connected weighted graph $G=(V,E)$ is at vertex $u$, it will go to vertex $v$ with probability proportional to the weight of the edge $uv$ (see Section~1.4 in \cite{lyons2017probability}). The \emph{hitting time} $H_G(u,v)$ is the expected number of steps to walk from vertex $u$ to vertex $v$. We note that $H_G(v,v) = 0$ trivially for all vertex $v$. We shall study the \emph{average hitting time}, defined as the arithmetic mean of all hitting times of $G$,
    \[\alpha(G):=\frac{1}{|V|^2}\sum_{u\in V}\sum_{v\in V} H_G(u,v).\]

The \emph{stationary distribution} of $G$ is a unique probability vector $\pi_G$ satisfying $\pi_G^T=\pi_G^T P_G$, where $P_G$ is the transition probability matrix for the random walk on $G$ (see Exercise~2.1 in \cite{lyons2017probability}). The \emph{Kemeny's constant} of $G$, similar to $\alpha(G)$ but more complicated, is defined as
    \[\kappa(G):=\sum_{v\in V}H_G(u,v)\pi_G(v).\]
In words, $\kappa(G)$ is the expected time to travel from a fixed starting vertex $u$ to a random destination sampled from the stationary distribution. Formally, this definition of $\kappa(G)$ depends on the choice of $u$, but Kemeny and Snell \cite{kemeny} observed the surprising fact that the value $\kappa(G)$ is independent of $u$ (see Exercise~2.48 in \cite{lyons2017probability}). The Kemeny's constant has applications in network theory~\cite{levene2002kemeny}, robotics~\cite{patel2015robotic} and chemistry~\cite{li2019multiplicative}.

The original motivation of this paper is to prove the following.
\begin{theorem}\label{minmax_unweighted}
Among all unweighted trees of a fixed size, the path has the largest $\alpha$ and $\kappa$, and the star graph has the smallest $\alpha$ and $\kappa$.
\end{theorem}

We note that, among all unweighted trees of a fixed size, Ciardo et al. \cite{ciardo2022kemeny} previously proved the star graph minimizes $\kappa$; and recently, Faught et al. \cite{faught20221} proved the path maximizes $\kappa$. Our approach will be somewhat different from theirs, and it generalizes to a weighted setting: For a multiset (i.e. set with multiplicities) $W$ of positive numbers, we let $\mathcal{T}_W$ be the collection of all weighted trees (up to isomorphism) $T$ such that $W$ equals $\{\omega(e)| e\in E(T)\}$ as a multiset. Here, $\omega(e)$ denotes the weight of the edge $e$. See Figure~\ref{fig:TW} for examples of elements in $\mathcal{T}_W$. Notice that when $W$ only consists of multiple $1$'s, $\mathcal{T}_W$ is the collection of all unweighted trees of a fixed size. We will investigate which elements in $\mathcal{T}_W$ maximize or minimize $\alpha$ and $\kappa$ for a general $W$.

Clearly, there is a unique element $S_W$ in $\mathcal{T}_W$ whose underlying graph structure is a star graph. On the other hand, there are many weighted paths in $\mathcal{T}_W$ due to various ways to assign the edge weights. We care about a special type of ``polarized'' paths: For a weighted path $P=v_1v_2\dots v_n$, we denote its edges as $e_i:=v_{i}v_{i+1}$ and define $c(e_i):=\min\{i,n-i\}$ for $i=1,\dots,n-1$, then we say $P$ is \emph{polarized} provided $\omega(e_i) \geq \omega(e_j)$ whenever $c(e_i) \leq c(e_j)$.

Intuitively, $c(e_i)$ measures how centrally $e_i$ is located in $P$: We regard $e_1$ and $e_{n-1}$, the edges containing the leaves of $P$, as the least central edges hence they have the smallest $c(e_1)=c(e_{n-1})=1$; We regard $e_{2}$ and $e_{n-2}$ as the second least central edges hence they have the second smallest $c(e_2)=c(e_{n-2})=2$ and so on; We regard the middle edge $e_{\lceil\frac{n-1}{2} \rceil}$ (as well as $e_{\lceil\frac{n-1}{2} \rceil + 1}$ if $n$ is odd) as the most central edge hence it has the largest $c\left(e_{\lceil\frac{n-1}{2} \rceil}\right)=\lceil\frac{n-1}{2} \rceil$. So, in words, a weighted path is polarized if edges located more centrally are assigned smaller weights. See Figure~\ref{fig:TW} for an illustration.

\begin{figure}[ht]
    \centering
    \includegraphics[scale=1.1]{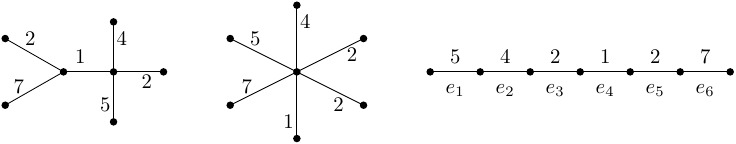}
    \caption{From left to right: a weighted tree in $\mathcal{T}_W$ where $W=\{7,5,4,2,2,1\}$ and the numbers indicate edge weights; the weighted star $S_W$; a polarized path in $\mathcal{T}_W$ where $c(e_1)=c(e_6)=1$, $c(e_2)=c(e_5)=2$, and $c(e_3)=c(e_4)=3$. }
    \label{fig:TW}
\end{figure}

\begin{theorem}\label{minmax_aht}
In $\mathcal{T}_W$, where $W$ is a multiset of positive numbers, the polarized paths are the elements that maximize $\alpha$; and $S_W$ is the unique element that minimizes $\alpha$.
\end{theorem}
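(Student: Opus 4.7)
My plan is to reduce the optimization of $\alpha$ on $\mathcal{T}_W$ to a combinatorial problem about a simpler functional. Combining the commute-time identity $H_T(u,v) + H_T(v,u) = \vol(T)\,R_T(u,v)$ with the series formula $R_T(u,v) = \sum_{e \in \pi(u,v)} \omega(e)^{-1}$ (where $\pi(u,v)$ denotes the unique $uv$-path in $T$), a double count over edges gives
\[\sum_{u, v \in V(T)} H_T(u, v) \;=\; \vol(T) \sum_{e \in E(T)} \frac{n_1^e \, n_2^e}{\omega(e)},\]
where $n_1^e, n_2^e$ denote the sizes of the two components of $T - e$. Since $\vol(T) = 2\sum_{w \in W} w$ and $n = |W|+1$ are determined by $W$, it suffices to study the functional $\Sigma(T) := \sum_{e \in E(T)} n_1^e n_2^e/\omega(e)$ on $\mathcal{T}_W$.

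The minimum is then immediate: since $n_1^e + n_2^e = n$ and $n_1^e, n_2^e \geq 1$, we have $n_1^e n_2^e \geq n - 1$ term-by-term, with equality iff $\{n_1^e, n_2^e\} = \{1, n-1\}$, i.e., iff $e$ is incident to a leaf. Summing yields $\Sigma(T) \geq (n-1)\sum_{w \in W}1/w$, a constant depending only on $W$, and equality forces every edge of $T$ to touch a leaf, which is possible only when $T = S_W$.

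The maximum hinges on a combinatorial lemma: for every tree $T$ on $n$ vertices and every $j \geq 1$, the number of edges $e \in E(T)$ with $\min(n_1^e, n_2^e) \geq j$ is at most $n - 2j + 1$, which is the count for the path. I plan to prove this in two steps. First, I will show that the set $E_j$ of such \emph{$j$-central} edges forms a subtree of $T$: if $e$ lies on the tree-path between two $j$-central edges $e_1, e_2$, then the component of $T - e$ containing $e_i$ includes the side of $T - e_i$ that lies opposite to $e$, which has at least $j$ vertices by $j$-centrality of $e_i$; hence $e$ is itself $j$-central. Second, letting $V_j$ be the vertex set of this subtree, each leaf $v$ of $V_j$ gives rise, via its unique incident $E_j$-edge, to a \emph{hanging} component of $T$ of at least $j$ vertices; the hanging components at distinct leaves of $V_j$ are pairwise disjoint and meet $V_j$ only at those leaves. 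Since any nontrivial subtree has at least two leaves, counting vertices yields $|V_j| + 2(j-1) \leq n$, and hence $|E_j| = |V_j| - 1 \leq n - 2j + 1$.

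With the lemma in hand, the sorted multiset $\{n_1^e n_2^e : e \in E(T)\}$ is dominated coordinate-wise by the analogous sorted sequence for the path on $n$ vertices. By the rearrangement inequality, the maximum of $\Sigma$ over all assignments of $W$ to $E(T)$ is an increasing function of this sorted sequence, so the path realizes the maximum of $\Sigma$ over $\mathcal{T}_W$; the optimal pairing---smallest $\omega$ to the most central edge---is precisely the polarization condition, matching the theorem's description. Strictness for non-path trees already appears at $j = 2$: any tree with more than two leaves has strictly more than two leaf edges and thus fewer than $n - 3$ edges with $\min(n_1^e,n_2^e) \geq 2$, producing strict coordinate-wise domination and therefore strict inequality in $\Sigma$. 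The principal technical obstacle is the combinatorial lemma on $j$-central edges; the remainder is a routine application of rearrangement.
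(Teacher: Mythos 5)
Your proposal is correct, and after its opening step it lands on the same reduction as the paper --- on a weighted tree $\alpha$ is a fixed positive multiple of $\sum_{e\in E(T)} n_1^e n_2^e/\omega(e)$ --- but it reaches that formula differently (commute times plus series resistance, rather than Lemma~\ref{forestformula_aht} specialized to trees, where $\mathbb{F}_2(T)=\{T\setminus e\}$) and, more importantly, it optimizes the functional by a genuinely different mechanism. The paper uses a local exchange move, the edge-transfer with respect to size, shows each move strictly decreases $\alpha$, and identifies paths and the star as the extremal elements of the induced partial order, applying rearrangement only at the very end on the path. You instead get the minimum in one line from the term-wise bound $n_1^e n_2^e\geq n-1$ (with equality exactly at leaf edges, forcing the star), and for the maximum you prove a global majorization statement: the decreasing rearrangement of the centralities $\min(n_1^e,n_2^e)$ over any tree is dominated coordinate-wise by that of the path, via the lemma that the $j$-central edges form a subtree each of whose leaves carries a pairwise disjoint hanging component of at least $j$ vertices, giving $|E_j|\leq n-2j+1$. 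I checked the lemma, the strictness at $j=2$ for trees with at least three leaves, and the identification of the equality cases of the rearrangement inequality with the polarization condition; all are sound. What the paper's route buys is the partial order $\succeq_s$ discussed in Remark~\ref{rmk_sidorenko} and a template that transfers, with sizes replaced by volumes, to Kemeny's constant in Theorem~\ref{minmax_kemeny}; your route would not transfer as cleanly there, since $\vol_T(T_1)\vol_T(T_2)$ depends on the weight assignment and not just the tree shape. What your route buys is a shorter, more direct proof of the star's minimality and an explicit quantitative structural fact about trees that replaces the chain of local moves.
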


By a slight modification of the proof of Theorem~\ref{minmax_aht}, we obtain a similar result for the Kemeny's constant.
\begin{theorem}\label{minmax_kemeny}
In $\mathcal{T}_W$, where $W$ is a multiset of positive numbers, the elements that maximize $\kappa$ are exactly the paths $P=v_1v_2\dots v_n$ whose edge weight assignment maximizes
\[\sum_{i=1}^{n-1}\sum_{j=1}^{i-1}\sum_{k=i+1}^{n-1} \frac{\omega(e_j)\omega(e_k)}{\omega(e_i)},\]
where $e_i:=v_iv_{i+1}$ for $i=1,\dots,n-1$; and $S_W$ is the unique element that minimizes $\kappa$.
\end{theorem}

The characterization in Theorem~\ref{minmax_kemeny} is not as clean as that in Theorem~\ref{minmax_aht}. The major contribution in the proof of Theorem~\ref{minmax_kemeny} is to show that a $\kappa$-maximizing element in $\mathcal{T}_W$ has to be a path. Nevertheless, there is only one unweighted path with a given size, so Theorem~\ref{minmax_unweighted} is implied by Theorems~\ref{minmax_aht}~and~\ref{minmax_kemeny}.

Inside a general $\mathcal{T}_W$, the $\alpha$-maximizing elements are quite different from the $\kappa$-maximizing elements. For example, when $W=\{10,8,1,1,0.1\}$, we can check through Theorem~\ref{minmax_kemeny} that $\kappa$ is maximized by a path with edge weight assignment $\omega_1=10$, $\omega_2=0.1$, $\omega_3=1$, $\omega_4=1$ and $\omega_5=8$, which is not polarized, hence not $\alpha$-maximizing by Theorem~\ref{minmax_aht}.

The rest of this paper is organized as follows: Section~\ref{section_pre} presents definitions, notations, and the forest formulas for $\alpha$ and $\kappa$; Sections~\ref{section_aht}~and~\ref{section_kemeny} are devoted to the proof of Theorems~\ref{minmax_aht}~and~\ref{minmax_kemeny} respectively; Section~\ref{section_remark} lists some remarks.

\section{Preliminaries}\label{section_pre}

Let $G=(V,E)$ be a connected weighted graph without loops. For each edge $e=uv\in E$, we denote its \emph{weight} by $\omega(e)=\omega(uv)>0$. It is required that $\omega(e)=0$ for $e \in V\times V\setminus E$. For any subgraph $H$ of $G$, we define its weight by
   \[\omega(H)=\prod_{e\in E(H)}\omega(e).\]

For each vertex $u$, we define its \emph{degree} by $d_G(u):=\sum_{v\in V} \omega(uv)$. The \emph{volume} of a subset $U\subset V$ refers to the quantity $\vol_G(U):=\sum_{u\in U}d_G(u)$. The volume of a subgraph $H\subset G$ refers to the volume of $V(H)\subset V$ and is denoted by $\vol_{G}(H)$. We remind our readers that in general $\vol_G(H)\neq \vol_H(H)$. The \emph{size} of $G$ refers to the number of its vertices and is denoted by $|G|$.

A \emph{tree} is a graph that is acyclic and connected. A \emph{$2$-forest} is a graph that is acyclic and has two connected components. A subgraph $H\subset G$ is said to be \emph{spanning} if $V(H)=V(G)$. We use $\mathbb{F}_1(G)$ to denote the set of all spanning sub-trees of $G$, and use $\mathbb{F}_2(G)$ to denote the set of all spanning sub-$2$-forests of $G$.

We shall use the following two lemmas in our proofs. They express $\alpha(G)$ and $\kappa(G)$ as weighted enumerations of elements in $\mathbb{F}_1(G)$ and $\mathbb{F}_2(G)$. For simplicity of presentation, we define the quantity 
   \[\tau(G):=\sum_{T\in \mathbb{F}_1(G)}\omega(T);\]
And we define for any $2$-forest $F\in \mathbb{F}_2(G)$, whose two components are denoted as $T_1$ and $T_2$,
  \[S(F):=|T_1||T_2|\qquad\text{and}\qquad V_G(F):=\vol_{G}(T_1)\vol_G(T_2).\]

\begin{lemma}\label{forestformula_aht}
For a connected weighted graph $G$, we have
   \[\frac{|G|}{\vol_G(G)}\alpha(G)=\frac{1}{|G|\tau(G)}\sum_{F\in \mathbb{F}_2(G)}S(F)\omega(F).\]
\end{lemma}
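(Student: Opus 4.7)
The plan is to reduce the formula to two classical identities about random walks and electrical networks: the commute-time identity and the Kirchhoff-style expression for effective resistance.

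Step one is to invoke the commute-time identity, which states that for any two vertices $u, v$,
$$H_G(u,v) + H_G(v,u) = \vol_G(G) \cdot R_G(u,v),$$
where $R_G(u,v)$ is the effective resistance between $u$ and $v$ when each edge $e$ is interpreted as a resistor of conductance $\omega(e)$. Step two is the weighted forest version of the matrix-tree theorem, which gives
$$R_G(u,v) = \frac{1}{\tau(G)} \sum_{F \in \mathbb{F}_2(G),\ F \text{ separates } u,v} \omega(F).$$
Both identities are standard in the weighted setting and appear in the probabilistic/electric-network references the paper already cites.

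The remaining work is pure bookkeeping. Starting from the symmetrization
$$\sum_{u,v \in V} H_G(u,v) = \frac{1}{2}\sum_{u,v \in V}\bigl(H_G(u,v) + H_G(v,u)\bigr),$$
I would substitute the commute-time identity and then the forest formula for $R_G(u,v)$, and interchange the order of summation. Each $F = T_1 \sqcup T_2 \in \mathbb{F}_2(G)$ then contributes $\omega(F)$ to exactly the ordered pairs $(u,v)$ that it separates, of which there are $2|T_1||T_2| = 2S(F)$. Collecting constants and dividing by $|G|^2$ to recover $\alpha(G)$ yields the claimed formula after trivial rearrangement.

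The only subtlety I anticipate is keeping the ordered-versus-unordered-pair accounting straight and tracking the factor of $2$ introduced by symmetrization; the two imported identities themselves serve as black boxes, so there is no deep obstacle beyond verifying that both hold for arbitrary positive edge-weights (which they do, via the weighted matrix-tree theorem and the standard electric-network argument).
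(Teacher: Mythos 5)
Your proposal is correct, and the arithmetic checks out: symmetrizing gives $\sum_{u,v}H_G(u,v)=\tfrac{1}{2}\vol_G(G)\sum_{u,v}R_G(u,v)$, each $2$-forest $F$ with components $T_1,T_2$ separates exactly $2|T_1||T_2|=2S(F)$ ordered pairs, the factor $2$ cancels the $\tfrac12$ from symmetrization, and dividing by $|G|^2$ and multiplying by $|G|/\vol_G(G)$ lands exactly on the stated identity. Be aware, though, that the paper offers no proof of this lemma at all: it is imported verbatim from \cite{chung2021forest} (Corollary~4.5), where it is obtained by a rather different route, namely forest expressions for discrete Green's functions and the spectral identity $\frac{|G|}{\vol_G(G)}\alpha(G)=\sum_i\lambda_i^{-1}$ mentioned in Remark~\ref{rmk_spectra}. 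Your derivation via the commute-time identity $H_G(u,v)+H_G(v,u)=\vol_G(G)R_G(u,v)$ and the matrix-tree expression $R_G(u,v)=\tau(G)^{-1}\sum_{F\text{ sep. }u,v}\omega(F)$ is more elementary and self-contained for this particular corollary, at the cost of invoking two external black boxes; the Green's-function machinery of the cited source buys the companion formula for $\kappa$ (Lemma~\ref{forestformula_kemeny}) in the same framework, which your commute-time argument does not directly give since $\kappa$ weights destinations by the stationary distribution rather than uniformly. The only point worth making explicit in a written version is that both imported identities do hold for arbitrary positive edge-weights, which you correctly flag.
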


\begin{lemma}\label{forestformula_kemeny}
For a connected weighted graph $G$, we have
   \[\kappa(G)=\frac{1}{\vol_G(G)\tau(G)}\sum_{F\in \mathbb{F}_2(G)}V_G(F)\omega(F).\]
\end{lemma}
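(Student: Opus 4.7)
The plan is to reduce the Kemeny's constant formula to a \emph{forest formula for individual hitting times}. The key intermediate claim I would first establish is that for any two distinct vertices $u,v$ in a connected weighted graph $G$,
\begin{align*}
H_G(u,v) \;=\; \frac{1}{\tau(G)} \sum_{\substack{F\in\mathbb{F}_2(G)\\ u\in T_1,\, v\in T_2}} \vol_G(T_1)\,\omega(F),
\end{align*}
where $T_1,T_2$ denote the two components of $F$. This single-pair hitting-time identity is essentially Chaiken's all-minors generalization of the matrix-tree theorem specialized to random walks, and it is the technical heart of the argument; the Kemeny formula drops out of it by a short swap of summations.

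To prove the intermediate claim, I would express $H_G(u,v)$ through the weighted Laplacian $L_G$. The standard route writes $H_G(u,v)$ as a ratio of determinants of principal submatrices of $L_G$ via Cramer's rule applied to the harmonic system for hitting potentials, and then invokes the all-minors matrix-tree theorem to reinterpret each such determinant as a weighted sum over spanning forests with prescribed component structure. A more self-contained alternative is to verify directly that the right-hand side, as a function of $u$ with $v$ fixed, satisfies the defining recursion $h(u)=1+\sum_{w} P_G(u,w)h(w)$ for $u\ne v$ together with $h(v)=0$; uniqueness of bounded solutions then forces equality with $H_G(u,v)$. This verification reduces to a local combinatorial identity relating, for each vertex $u$ and each neighbor $w$, the 2-forests separating $u$ from $v$ to those separating $w$ from $v$, with the weights accounted for by $\omega(uw)/d_G(u)$.

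Given the intermediate claim, the rest is a direct computation. Substituting into $\kappa(G)=\sum_v H_G(u,v)\pi_G(v)$ with $\pi_G(v)=d_G(v)/\vol_G(G)$ yields
\begin{align*}
\kappa(G)\;=\;\frac{1}{\vol_G(G)\,\tau(G)}\sum_{v\in V}d_G(v)\!\!\sum_{\substack{F\in\mathbb{F}_2(G)\\ u\in T_1,\, v\in T_2}}\vol_G(T_1)\,\omega(F).
\end{align*}
I would then exchange the order of summation, fixing $F$ first. For each such $F$ with $u\in T_1$, the inner sum over $v\in T_2$ collects $\sum_{v\in T_2} d_G(v)=\vol_G(T_2)$, so $F$ contributes $\vol_G(T_1)\vol_G(T_2)\omega(F)=V_G(F)\omega(F)$. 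Because $V_G(F)$ is symmetric in the two components, the answer depends only on $F$ and not on which component houses $u$, which recovers Kemeny's classical observation that $\kappa(G)$ is independent of the starting vertex and produces the claimed formula.

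The main obstacle is the intermediate forest formula for $H_G(u,v)$: it is where the genuinely nontrivial bridge between random-walk quantities and Kirchhoff-type enumeration lives, and any careful proof will need either the all-minors matrix-tree theorem or an equivalent combinatorial bijection. In comparison, once that identity is granted, the passage to Lemma~\ref{forestformula_kemeny} is a two-line manipulation with no surprises.
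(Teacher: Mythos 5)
The paper does not actually prove this lemma; it imports it verbatim from \cite{chung2021forest} (Corollary~4.4), so there is no internal proof to compare against. Judged on its own terms, your outline is the standard and correct route to this identity, and it matches how such forest formulas are derived in \cite{chung2021forest} and \cite{chebotarev2020hitting}: first a forest formula for the single-pair hitting time, then an averaging against the stationary distribution. Your reduction step is sound and complete: I checked the intermediate identity $H_G(u,v)=\tau(G)^{-1}\sum_{F:\,u\in T_1,\,v\in T_2}\vol_G(T_1)\,\omega(F)$ on small examples (note it is the volume of the component containing the \emph{starting} vertex that appears, as you correctly wrote), and the exchange of summation does collect $\sum_{v\in T_2}d_G(v)=\vol_G(T_2)$ for each fixed $F$, giving $V_G(F)\omega(F)$ per $2$-forest; the symmetry of $V_G(F)$ in the two components is exactly what makes the result independent of $u$, consistent with Kemeny's observation.

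The one caveat is that the intermediate hitting-time formula, which you rightly identify as the technical heart, is asserted rather than proved. Your two suggested strategies both work: the Cramer's-rule-plus-all-minors-matrix-tree-theorem route is the classical one, and the alternative of verifying that the right-hand side satisfies $h(v)=0$ and $d_G(u)h(u)-\sum_{w}\omega(uw)h(w)=d_G(u)$ for $u\neq v$ (then invoking uniqueness of the solution to the hitting-time system) is self-contained but requires a genuine combinatorial identity matching $2$-forests separating $u$ from $v$ against those separating its neighbors $w$ from $v$. As a proof proposal this is acceptable, but a complete writeup would need to carry out one of these two arguments in full; everything downstream of that point is, as you say, a two-line manipulation.
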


Lemma~\ref{forestformula_aht} is established in \cite{chung2021forest} as Corollary~4.5. Lemma~\ref{forestformula_kemeny} is established in \cite{chebotarev2007graph} (see also \cite{chebotarev2020hitting} and \cite{kirkland2016kemeny}) and our statement here takes the form of Corollary~4.4(ii) in \cite{chung2021forest}.

\section{Proof of Theorem~\ref{minmax_aht}}\label{section_aht}

We consider the following operation on a tree $T$: Take two adjacent edges $e_1=v_1v_2$ and $e_2=v_2v_3$, and name the component of $T\setminus\{e_1,e_2\}$ containing $v_i$ as $T_i$, for $i=1,2,3$. If $|T_1|>|T_2|$, we construct another tree $T'$ from $T$ by deleting the edge $e_2$ and adding an edge $e_2'=v_1v_3$ with weight $\omega(e_2'):=\omega(e_2)$. In this case, we say \emph{$T$ edge-transfers to $T'$ with respect to size}. See Figure~\ref{fig:edge_transfer} for an illustration.
\begin{figure}[ht]
    \centering
    \includegraphics[scale=1.1]{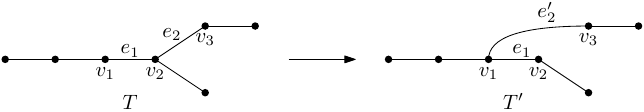}
    \caption{$T$ edge-transfers to $T'$ with respect to size.}
    \label{fig:edge_transfer}
\end{figure}

\begin{lemma}\label{inequality_aht}
If $T$ edge-transfers to $T'$ with respect to size, then $\alpha(T)>\alpha(T')$.
\end{lemma}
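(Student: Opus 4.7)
The plan is to apply the forest formula for the average hitting time (Lemma~\ref{forestformula_aht}) to both $T$ and $T'$ and compare. Since $T'$ is obtained from $T$ by an edge swap that preserves the multiset of edge-weights, one has $|T|=|T'|$, $\vol_T(T)=\vol_{T'}(T')$, and $\tau(T)=\omega(T)=\omega(T')=\tau(T')$. Therefore the desired inequality $\alpha(T)>\alpha(T')$ reduces to
\begin{equation*}
\sum_{F\in\mathbb{F}_2(T)} S(F)\omega(F)\;>\;\sum_{F\in\mathbb{F}_2(T')} S(F)\omega(F).
\end{equation*}

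Since $T$ and $T'$ are themselves trees, every spanning $2$-forest is obtained by deleting exactly one edge, so each sum can be re-indexed over the edges. Deleting $e$ from $T$ contributes $a_e(|T|-a_e)\,\omega(T)/\omega(e)$, where $a_e$ and $|T|-a_e$ are the sizes of the two components produced. I partition the edge set into four classes and handle them separately: edges lying inside $T_1$, inside $T_2$, inside $T_3$, and the two ``bridge'' edges $\{e_1,e_2\}$ (resp.\ $\{e_1,e_2'\}$).

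The central observation is that for any edge $e$ lying inside some $T_i$, the two component sizes after deletion agree in $T$ and $T'$. Indeed, removing such an $e$ splits $T_i$ into two subtrees, and in both $T$ and $T'$ the remaining two pieces $T_j,T_k$ stay attached to the subtree that contains $v_i$ (through $e_1,e_2$ in $T$, and through $e_1,e_2'$ in $T'$). The weights $\omega(T)$ and $\omega(e)$ also coincide, so these contributions cancel. The bridge edges $e_2$ and $e_2'$ likewise contribute equally: each carries weight $\omega(e_2)$, and deleting either isolates precisely $T_3$, yielding component sizes $|T_3|$ and $|T_1|+|T_2|$.

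Thus the entire discrepancy is concentrated at $e_1$. In $T$, deleting $e_1$ isolates $T_1$, giving sizes $|T_1|$ and $|T_2|+|T_3|$; in $T'$, deleting $e_1$ isolates $T_2$ instead, since the only other neighbor of $v_2$ is $v_1$, which is now attached to $T_3$ via $e_2'$. The resulting difference is proportional to
\begin{equation*}
|T_1|\bigl(|T_2|+|T_3|\bigr)-|T_2|\bigl(|T_1|+|T_3|\bigr) \;=\; |T_3|\bigl(|T_1|-|T_2|\bigr),
\end{equation*}
which is strictly positive because $|T_3|\ge 1$ and $|T_1|>|T_2|$ by hypothesis. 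I do not anticipate any serious obstacle; the only delicacy is the bookkeeping of which side of each edge the pieces $T_1,T_2,T_3$ land on in $T$ versus $T'$, which is routine once the four edge classes are listed.
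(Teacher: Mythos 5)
Your proposal is correct and follows essentially the same route as the paper's proof: both reduce the comparison via Lemma~\ref{forestformula_aht} to the single spanning $2$-forest obtained by deleting $e_1$, after checking that all other edge-deletions give identical component partitions and weights, and both arrive at the difference $|T_3|(|T_1|-|T_2|)>0$. The only cosmetic slip is your justification that deleting $e_1$ in $T'$ isolates $T_2$ ``since the only other neighbor of $v_2$ is $v_1$'' --- $v_2$ may have neighbors inside $T_2$; the correct reason is that $e_2$ was removed in forming $T'$, so $e_1$ is the only edge leaving $T_2$ --- but this does not affect the argument.
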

\begin{proof}
We keep the notations as in the previous paragraph. Since $T$ and $T'$ are trees, by definition of $\tau(T)$ and $\tau(T')$, we have $\tau(T)=\omega(T)=\omega(T')=\tau(T')=:\tau$. It is also obvious that $|T|=|T'|=:n$ and $\vol_T(T)=\vol_{T'}(T')=:\vol$.

Notice that for any edge $e\not\in \{e_1,e_2,e_2'\}$, the $2$-forests $T\setminus e$ and $T'\setminus e$ give the same component partition of vertices, that is, two vertices $x,y$ are in the same component of $T\setminus e$ if and only if they are in the same component of $T'\setminus e$. See Figure~\ref{fig:proof} for an illustration. So we have $S(T\setminus e)=S(T'\setminus e)$. We also have $\omega(T\setminus e)=\tau/\omega(e)=\omega(T'\setminus e)$. The two $2$-forests $T\setminus e_2$ and $T'\setminus e_2'$ are equal as graphs, so $S(T\setminus e_2)=S(T'\setminus e'_2)$ and $\omega(T\setminus e_2)=\omega(T'\setminus e'_2)=\tau/\omega(e_2)$. Therefore, the difference between $\alpha(T)$ and $\alpha(T')$ in the formula of Lemma~\ref{forestformula_aht} is only contributed by the $2$-forests $T\setminus e_1$ and $T'\setminus e_1$:\begin{equation}\label{eq:inequality_aht_1}
    \frac{n}{\vol}(\alpha(T)-\alpha(T'))=\frac{1}{n\omega(e_1)}(S(T\setminus e_1)-S(T'\setminus e_1)).
\end{equation} Here we used $\omega(T\setminus e_1)=\tau/\omega(e_1)=\omega(T'\setminus e_1)$.
\begin{figure}[ht]
    \centering
    \includegraphics[scale=1.1]{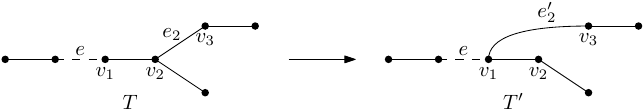}
    \caption{$T\setminus e$ and $T'\setminus e$ give the same component partition of vertices.}
    \label{fig:proof}
\end{figure}

The two components of $T\setminus e_1$ are $T_1$ and $T_2\cup T_3$ respectively, and the two components of $T'\setminus e_1$ are $T_1\cup T_3$ and $T_2$ respectively. Hence we have
    \[S(T\setminus e_1)-S(T'\setminus e_1)= |T_1|(|T_2|+|T_3|)-(|T_1|+|T_3|)|T_2|=(|T_1|-|T_2|)|T_3|>0,\]
since $|T_1|>|T_2|$ by our assumption of the edge-transfer. This inequality together with \eqref{eq:inequality_aht_1} concludes $\alpha(T)>\alpha(T')$.
\end{proof}

Let us write $T\succeq_s T'$ if $T'$ can be obtained after applying multiple (possibly zero times) edge-transfers with respect to size starting with $T$. As a consequence of Lemma~\ref{inequality_aht}, the relation $\succeq_s$ defines a partial order on $\mathcal{T}_W$.

\begin{lemma}\label{poset_aht}
For any multiset $W$ of positive numbers, the maximal elements of the poset $(\mathcal{T}_W,\succeq_s)$ are the weighted paths, and the minimal element of $(\mathcal{T}_W,\succeq_s)$ is the weighted star $S_W$.
\end{lemma}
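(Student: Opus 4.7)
I will characterize separately the trees admitting no outgoing edge-transfer (the minimal elements) and those admitting no incoming one (the maximal elements). For the minimum, if $T$ is minimal then at every vertex $v$ of degree $\ge 2$ and every pair of distinct neighbors $u,u'$ of $v$, the edge-transfer with middle vertex $v$ must be disallowed, giving
\[
|T_u^v|\le 1+\sum_{w\ne u,u'}|T_w^v|,
\]
where the sum ranges over $v$'s other neighbors and $T_u^v$ denotes the component of $T\setminus\{uv\}$ containing $u$. Using $\sum_{w}|T_w^v|=n-1$ and maximizing over the pair, this becomes $2s_{(1)}(v)+s_{(2)}(v)\le n$ where $s_{(1)}(v)\ge s_{(2)}(v)\ge\cdots$ are the subtree sizes at $v$'s neighbors in non-increasing order. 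Since $s_{(2)}(v)\ge 1$, this forces $s_{(1)}(v)<n/2$. I then invoke the classical centroid fact: in any tree, at most one vertex has $s_{(1)}<n/2$ (a tree has one centroid or two adjacent centroids, and in the two-centroid case both satisfy $s_{(1)}=n/2$ exactly). Since a leaf has $s_{(1)}=n-1\ge n/2$, this means $T$ has at most one non-leaf, so $T=S_W$. Conversely $S_W$ is itself minimal: at its center $c$, $s_{(1)}(c)=s_{(2)}(c)=1$, so $2+1\le n$ for $n\ge 3$.

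For the maximum, I will show weighted paths are exactly the maximal elements in two steps. First, paths have no predecessor: if $T^*\to P$ via $e_1=v_1v_2,\,e_2=v_2v_3$ with $P$ a weighted path, then $T^*\setminus\{e_1,e_2\}=P\setminus\{e_1,v_1v_3\}$ and both edges removed from $P$ are incident to $v_1$; since $P$ is a path, $v_1$ has degree at most $2$ in $P$ and is therefore isolated after these removals, so the $v_1$-component has size $1$, which cannot strictly exceed the size $\ge 1$ of the $v_2$-component, contradicting $|T_1^*|>|T_2^*|$. Second, every non-path $T$ has a predecessor: such a $T$ contains a vertex $x$ of degree $k\ge 3$ with neighbors $u_1,\ldots,u_k$ and subtree sizes $s_l=|T_{u_l}^x|$. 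Pick indices $i,j$ with $s_i,s_j$ the two smallest. Then the $x$-component of $T\setminus\{xu_i,xu_j\}$ has size $1+\sum_{l\ne i,j}s_l\ge 1+s_{l^*}\ge 1+s_i>s_i$ for any third index $l^*$ (which exists since $k\ge 3$), using $s_{l^*}\ge s_i$. Setting $T^*:=T-xu_j+u_iu_j$, with the new edge inheriting the weight of $xu_j$, then yields a valid predecessor in $\mathcal{T}_W$.

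The main obstacle I foresee is carefully tracking the two directions of the edge-transfer: in the transition $T\to T'$, the middle vertex $v_2$ is the one losing an edge while $v_1$ is the one gaining an edge in $T'$, so the right vertex to analyze in a given tree depends on whether one is arguing ``no transfer leaves'' (minimum) or ``no transfer arrives'' (maximum). Once this bookkeeping is in place, the minimum reduces to the centroid dichotomy and the maximum to an elementary counting at a vertex of degree $\ge 3$.
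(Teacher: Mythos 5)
Your argument is correct. The maximal-element half is essentially the paper's: the paper likewise shows a weighted path cannot be the target of an edge-transfer (there the point is that $|T_1|>|T_2|\ge 1$ forces $|T_1|\ge 2$, so $v_1$ would have degree at least $3$ in the resulting tree; your version, deducing $|T_1^*|=1$ from $v_1$ having degree $2$ in the path, is the same computation in contrapositive form), and likewise builds a predecessor of any non-path by re-hanging one edge at a vertex of degree at least $3$ --- the paper compares the two largest of three chosen subtrees where you compare the two smallest of all of them, but both choices make the required size inequality immediate. The minimal-element half is where you genuinely diverge. The paper's proof is a short explicit construction: any non-star contains a path $v_0v_1v_2v_3$, and transferring $v_2v_3$ to $v_1v_3$ is a legal edge-transfer because the $v_2$-side component loses at least the vertex $v_3$. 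You instead encode ``no transfer centered at $v$'' as $2s_{(1)}(v)+s_{(2)}(v)\le n$, deduce $s_{(1)}(v)<n/2$ for every vertex of degree at least $2$, and finish with the centroid dichotomy. This costs you an appeal to Jordan's centroid theorem that the paper avoids, but it buys a cleaner conceptual picture --- a size-minimal tree is one whose every internal vertex is a strict centroid, and a tree has at most one such vertex --- and all the steps check out. The one point you leave implicit, which the paper spells out, is that the constructed predecessor (resp.\ successor) must be non-isomorphic to $T$ itself for maximality (resp.\ minimality) to fail; this is immediate from Lemma~\ref{inequality_aht}, since $\alpha$ strictly decreases along any edge-transfer.
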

\begin{proof}
Suppose $T'\in \mathcal{T}_W$ is not a weighted path, then $T'$ has a vertex $v$ with at least three vertices $u_1,u_2,u_3$ adjacent to $v$. Let $T'_i$ be the component of $T'\setminus vu_i$ containing $u_i$, for $i=1,2,3$. Without loss of generality, we can assume $|T'_1|\geq |T'_2|$. Now, we construct another tree $T$ in $\mathcal{T}_W$ from $T'$ by deleting the edge $vu_3$ and adding the edge $u_2u_3$ with weight $\omega(u_2u_3):=\omega(vu_3)$. Then $T$ edge-transfers to $T'$ with respect to size. Indeed, in $T$, if we name $v_1:=v$, $v_2:=u_2$, $v_3:=u_3$, $e_1:=v_1v_2$, $e_{2}:=v_2v_3$ and the component of $T\setminus\{e_1,e_2\}$ containing $v_i$ as $T_i$, we have $|T_1|\geq |T'_1\cup \{v\}|>|T'_1|\geq |T'_2|=|T_2|$. So we can perform an edge-transfer on $T$, taking $e_1,e_2$ as the two adjacent edges, to obtain $T'$. By Lemma~\ref{inequality_aht}, $T$ and $T'$ are not isomorphic, so $T'$ is not a maximal element of $(\mathcal{T}_W,\succeq_s)$. On the other hand, since every vertex of a weighted path $P\in \mathcal{T}_W$ has at most two neighbors, $P$ cannot be a result of an edge-transfer, so $P$ is maximal with respect to $\succeq_s$.

Suppose $T\in \mathcal{T}_W$ is not $S_W$, then $T$ contains a path of length three which we can write as $v_0v_1v_2v_3$. Let $C_i$ be the component of $T\setminus v_1v_2$ containing $v_i$, for $i=1,2$. Without loss of generality, we can assume $|C_1|\geq |C_2|$. Then we can perform an edge-transfer with respect to size on $T$ to obtain another tree $T'$. Indeed, if we name $e_1:=v_1v_2$, $e_{2}:=v_2v_3$ and the component of $T\setminus\{e_1,e_2\}$ containing $v_i$ as $T_i$, we have $|T_1|=|C_1|\geq |C_2|>|C_2\setminus \{v_3\}|\geq |T_2|$. By Lemma~\ref{inequality_aht}, $T$ and $T'$ are not isomorphic, so $T$ is not a minimal element of $(\mathcal{T}_W,\succeq_s)$. On the other hand, it is easy to check that $S_W$ is minimal with respect to $\succeq_s$.
\end{proof}

\begin{proof}[Proof of Theorem~\ref{minmax_aht}]
As a consequence of Lemma~\ref{inequality_aht} and Lemma~\ref{poset_aht}, the elements that maximize $\alpha$ are among the weighted paths in $\mathcal{T}_W$, and $S_W$ is the unique element that minimizes $\alpha$.

Let $P=v_1v_2\dots v_n$, where $n=|W|+1$, be a weighted path and name the edges $e_i:=v_iv_{i+1}$ for $i=1,\dots,n-1$. According to Lemma~\ref{forestformula_aht}, we can compute
\[\alpha(P)=\frac{\vol_P(P)}{|P|^2\tau(P)}\sum_{i=1}^{n-1}S(P\setminus e_i)\omega(P\setminus e_i)=\frac{\sum_{w\in W} 2w}{n^2}\sum_{i=1}^{n-1}i(n-i)\omega(e_i)^{-1}.\]
In order to maximize $\alpha(P)$, we need to assign the edge weights such that $\omega(e_j)^{-1}\geq \omega(e_i)^{-1}$ if and only if $j(n-j)\geq i(n-i)$. It is easy to check that $j(n-j)\geq i(n-i)$ is equivalent to $\min\{j,n-j\}\geq \min\{i,n-i\}$ for $i,j=1,\dots,n-1$. Recall from Section~\ref{section_intro} that $c(e_i)=\min\{i,n-i\}$. Hence, a necessary condition for $P$ to be $\alpha$-maximizing is that $\omega(e_i)\geq \omega(e_j)$ whenever $c(e_i)\leq c(e_j)$, i.e. $P$ is polarized.

Now it suffices to show that all polarized paths in $\mathcal{T}_W$ have the same $\alpha$. To see this, we write $W=\{w_1,w_2,\dots, w_n\}$ such that $w_i\geq w_{i+1}$ for $i=1,\dots,n-1$. We also partition the edge set of $P$ into $E_i:=\{e_i,e_{n-i}\}$ for $i=1,\dots, \lceil \frac{n-1}{2} \rceil$. Since $e\in E_i$ if and only if $c(e)=i$, to generate all polarized paths in $\mathcal{T}_W$, it suffices to assign the weights $w_{2i-1}$ and $w_{2i}$ to edges in $E_i$ for all $i$. Because $S(P\setminus e_i)=S(P\setminus e_{n-i})$, the two possible assignments for $E_i$, $(\omega(e_i),\omega(e_{n-i}))=(w_{2i-1},w_{2i})$ or $(\omega(e_i),\omega(e_{n-i}))=(w_{2i},w_{2i-1})$, will not affect $\alpha$ in the formula of Lemma~\ref{forestformula_aht}. Hence, all weighted paths generated by this process have the same $\alpha$.
\end{proof}

\section{Proof of Theorem~\ref{minmax_kemeny}}\label{section_kemeny}
We consider another operation on a tree $T$: Take two adjacent edges $e_1=v_1v_2$ and $e_2=v_2v_3$, and name the component of $T\setminus\{e_1,e_2\}$ containing $v_i$ as $T_i$, for $i=1,2,3$. If $\vol_{T_1}(T_1)>\vol_{T_2}(T_2)$, we construct another tree $T'$ from $T$ by deleting the edge $e_2$ and adding an edge $e_2'=v_1v_3$ with weight $\omega(e_2'):=\omega(e_2)$. In this case, we say \emph{$T$ edge-transfers to $T'$ with respect to volume}.

\begin{lemma}\label{inequality_kemeny}
If $T$ edge-transfers to $T'$ with respect to volume, then $\kappa(T)>\kappa(T')$.
\end{lemma}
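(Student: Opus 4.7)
The plan is to imitate the proof of Lemma~\ref{inequality_aht} almost verbatim, replacing the quantity $S(F)=|T_1||T_2|$ with $V_G(F)=\vol_G(T_1)\vol_G(T_2)$ from the Kemeny forest formula (Lemma~\ref{forestformula_kemeny}). Since $T$ and $T'$ share the same edge-weight multiset, $\tau(T)=\tau(T')=:\tau$ and $\vol_T(T)=\vol_{T'}(T')$, so it suffices to show $\sum_{F\in\mathbb{F}_2(T)}V_T(F)\omega(F)>\sum_{F\in\mathbb{F}_2(T')}V_{T'}(F)\omega(F)$.

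The main new obstacle, compared with Lemma~\ref{inequality_aht}, is that $V_G$ depends on the degrees of the ambient graph $G$, whereas $S$ was a purely combinatorial quantity. My first step is to pin down how the edge-transfer perturbs degrees: a direct check shows only $v_1$ and $v_2$ see their degrees change, namely $d_{T'}(v_1)=d_T(v_1)+\omega(e_2)$ and $d_{T'}(v_2)=d_T(v_2)-\omega(e_2)$. The key consequence is that these perturbations cancel on any vertex subset containing both $v_1$ and $v_2$. Since $e_1$ is the unique edge of either tree that separates $v_1$ from $v_2$, for every edge $e\not\in\{e_1,e_2,e_2'\}$ the two forests $T\backslash e$ and $T'\backslash e$ give the same vertex partition and keep $v_1,v_2$ in the same component, yielding $V_T(T\backslash e)=V_{T'}(T'\backslash e)$. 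A similar check disposes of the $e_2$-versus-$e_2'$ pair: those two forests are identical as weighted graphs, again with $v_1,v_2$ in the same component.

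Hence the only discrepancy survives in the forests $T\backslash e_1$ and $T'\backslash e_1$, and exactly as in the $\alpha$-proof, $\omega(T\backslash e_1)=\omega(T'\backslash e_1)=\tau/\omega(e_1)$, so it comes down to comparing $V_T(T\backslash e_1)$ with $V_{T'}(T'\backslash e_1)$. Writing $A,B,C$ for the $T$-volumes of $V(T_1),V(T_2),V(T_3)$ and $\omega_2=\omega(e_2)$, the degree bookkeeping gives $V_T(T\backslash e_1)=A(B+C)$ and $V_{T'}(T'\backslash e_1)=(A+\omega_2+C)(B-\omega_2)$, whose difference expands and factors as $(C+\omega_2)(A-B+\omega_2)$. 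Translating back via $A=\vol_{T_1}(T_1)+\omega(e_1)$ and $B=\vol_{T_2}(T_2)+\omega(e_1)+\omega_2$ reduces the second factor to $\vol_{T_1}(T_1)-\vol_{T_2}(T_2)$, which is positive by the edge-transfer hypothesis, while $C+\omega_2>0$ trivially. Combining with Lemma~\ref{forestformula_kemeny} and the matching prefactors gives $\kappa(T)>\kappa(T')$. I do not anticipate any obstacle beyond this careful tracking of degree perturbations; the rest is bookkeeping that mirrors the $\alpha$-argument.
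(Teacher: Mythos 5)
Your proposal is correct and follows essentially the same route as the paper: reduce the comparison to the two forests $T\backslash e_1$ and $T'\backslash e_1$ via the invariance of $V$ on all other spanning $2$-forests, then do the volume bookkeeping, arriving at the same factorization $(\vol_{T_3}(T_3)+2\omega_2)(\vol_{T_1}(T_1)-\vol_{T_2}(T_2))>0$. Your packaging of the invariance step---the degree perturbations $\pm\omega(e_2)$ at $v_1$ and $v_2$ cancel on any component containing both---is a slightly tidier phrasing of the same computation the paper carries out component by component.
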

\begin{proof}
We keep the notations as in the previous paragraph. Similarly as in the proof of Lemma~\ref{inequality_aht}, we have $\tau(T)=\omega(T)=\omega(T')=\tau(T')=:\tau$ and $\vol_T(T)=\vol_{T'}(T')=:\vol$.

We have $V_T(T\setminus e)=V_{T'}(T'\setminus e)$ for any edge $e\not\in \{e_1,e_2,e_2'\}$. Indeed, denoting the component of $T\setminus e$ (resp. $T'\setminus e$) containing $v_1$ as $T_1$ (resp. $T_1'$), and the other component as $T_2$ (resp. $T'_2$), we must have $T_2=T_2'$ as graphs. See Figure~\ref{fig:proof} for an illustration. This implies
    \[\vol_{T}(T_2)=\vol_{T_2}(T_2)+\omega(e)=\vol_{T'_2}(T'_2)+\omega(e)=\vol_{T'}(T'_2),\]
and $\vol_{T}(T_1)=\vol -\vol_{T}(T_2)=\vol -\vol_{T'}(T'_2)=\vol_{T'}(T'_1)$. So $V_T(T\setminus e)=V_{T'}(T'\setminus e)$ as claimed. We also have $\omega(T\setminus e)=\tau/\omega(e)=\omega(T'\setminus e)$. A similar argument gives us $V_T(T\setminus e_2)=V_{T'}(T'\setminus e'_2)$ and $\omega(T\setminus e_2)=\tau/\omega(e_2)=\tau/\omega(e'_2)=\omega(T'\setminus e'_2)$. Therefore, the difference between $\kappa(T)$ and $\kappa(T')$ in the formula of Lemma~\ref{forestformula_kemeny} is only contributed by the $2$-forests $T\setminus e_1$ and $T'\setminus e_1$:\begin{equation}\label{eq:inequality_kemeny_1}
    \kappa(T)-\kappa(T')=\frac{1}{\vol \cdot \omega(e_1)}(V_T(T\setminus e_1)-V_{T'}(T'\setminus e_1)).
\end{equation} Here we used $\omega(T\setminus e_1)=\tau/\omega(e_1)=\omega(T'\setminus e_1)$.

Now we write $\omega_1:=\omega(e_1)$ and $\omega_2:=\omega(e_2)=\omega(e'_2)$. The two components of $T\setminus e_1$ are $T_1$ and $T_2\cup T_3$ respectively. Noticing that $d_T(v_1)=d_{T_1}(v_1)+\omega_1$ and $d_T(v)=d_{T_1}(v)$ for any $v\neq v_1$ in $T_1$, we have
   \[\vol_{T}(T_1)=\vol_{T_1}(T_1)+\omega_1.\]
Also notice that $d_{T}(v_2)=d_{T_2}(v_2)+\omega_1+\omega_2$, $d_{T}(v_3)=d_{T_3}(v_3)+\omega_2$, $d_T(v)=d_{T_2}(v)$ for any $v\neq v_2$ in $T_2$, and $d_T(v)=d_{T_3}(v)$ for any $v\neq v_3$ in $T_3$. So we have
    \[\vol_{T}(T_2\cup T_3)=\vol_{T_2}(T_2)+\omega_1+2\omega_2+\vol_{T_3}(T_3).\]
Similarly, the two components of $T'\setminus e_1$ are $T_1\cup T_3$ and $T_2$ respectively, and we can check $\vol_{T'}(T_1\cup T_3)=\vol_{T_1}(T_1)+\omega_1+2\omega_2+\vol_{T_3}(T_3)$ and $\vol_{T'}(T_2)=\vol_{T_2}(T_2)+\omega_1$. So we have the following computation.
\begin{align*}
    V_T(T\setminus e_1)-V_{T'}(T'\setminus e_1)&=(\vol_{T_1}(T_1)+\omega_1)(\vol_{T_2}(T_2)+\omega_1+2\omega_2+\vol_{T_3}(T_3))\\
    &\hspace{20pt}-(\vol_{T_1}(T_1)+\omega_1+2\omega_2+\vol_{T_3}(T_3))(\vol_{T_2}(T_2)+\omega_1)\\
    &=(\vol_{T_3}(T_3)+2\omega_2)(\vol_{T_1}(T_1)-\vol_{T_2}(T_2))>0,
\end{align*}since $\vol_{T_1}(T_1)>\vol_{T_2}(T_2)$ by our assumption of the edge-transfer. This inequality together with \eqref{eq:inequality_kemeny_1} concludes $\kappa(T)>\kappa(T')$.
\end{proof}

Let us write $T\succeq_v T'$ if $T'$ can be obtained after applying multiple (possibly zero times) edge-transfers with respect to volume starting with $T$. As a consequence of Lemma~\ref{inequality_kemeny}, the relation $\succeq_v$ defines a partial order on $\mathcal{T}_W$.

\begin{lemma}\label{poset_kemeny}
For any multiset $W$ of positive numbers, the maximal elements of the poset $(\mathcal{T}_W,\succeq_v)$ are the weighted paths, and the minimal element of $(\mathcal{T}_W,\succeq_v)$ is the weighted star $S_W$.
\end{lemma}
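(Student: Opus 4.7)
The plan is to mirror the proof of Lemma~\ref{poset_aht}, replacing each size comparison $|T_i| > |T_j|$ by its volume analog $\vol_{T_i}(T_i) > \vol_{T_j}(T_j)$. The topology of the edge-transfer operation is identical in both settings, so the same constructions should witness non-extremality once the inequalities are re-examined. The key task is to confirm that the new volume-based inequalities remain strict.

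To show that any non-path $T' \in \mathcal{T}_W$ is not maximal, I would pick a vertex $v$ of degree at least three with neighbors $u_1, u_2, u_3$, and let $T'_i$ be the component of $T' \setminus v u_i$ that contains $u_i$. Assume without loss of generality that $\vol_{T'_1}(T'_1) \geq \vol_{T'_2}(T'_2)$, and construct $T$ from $T'$ by deleting $v u_3$ and adding an edge $u_2 u_3$ of weight $\omega(v u_3)$. With $v_1 := v$, $v_2 := u_2$, $v_3 := u_3$, $e_1 := v u_2$, $e_2 := u_2 u_3$, the components $T_1, T_2, T_3$ of $T \setminus \{e_1, e_2\}$ satisfy $T_2 = T'_2$, $T_3 = T'_3$, and $T_1$ properly contains $T'_1$ together with the vertex $v$ joined by the edge $v u_1$. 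This gives
\[
\vol_{T_1}(T_1) \;\geq\; \vol_{T'_1}(T'_1) + 2\omega(v u_1) \;>\; \vol_{T'_1}(T'_1) \;\geq\; \vol_{T'_2}(T'_2) \;=\; \vol_{T_2}(T_2),
\]
so $T$ edge-transfers to $T'$ with respect to volume, whence $T' \prec_v T$ by Lemma~\ref{inequality_kemeny}.

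To see that a weighted path $P \in \mathcal{T}_W$ is maximal, suppose some $T$ edge-transferred to $P$. Then in $P$ the vertex $v_1$ would be incident to both $e_1 = v_1 v_2$ and $e_2' = v_1 v_3$, so $\deg_P(v_1) = 2$ with $\{v_2, v_3\}$ its only neighbors, forcing $T_1 = \{v_1\}$ and hence $\vol_{T_1}(T_1) = 0$; this contradicts the strict inequality $\vol_{T_1}(T_1) > \vol_{T_2}(T_2) \geq 0$ required by the edge-transfer definition. For the minimum end, given $T \neq S_W$ pick a length-three path $v_0 v_1 v_2 v_3$ inside $T$, let $C_i$ be the component of $T \setminus v_1 v_2$ containing $v_i$, and assume without loss of generality that $\vol_{C_1}(C_1) \geq \vol_{C_2}(C_2)$. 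Setting $e_1 := v_1 v_2$ and $e_2 := v_2 v_3$, the identity $\vol_{C_2}(C_2) = \vol_{T_2}(T_2) + 2\omega(v_2 v_3) + \vol_{T_3}(T_3)$ (from removing the edge $v_2 v_3$ inside $C_2$) gives $\vol_{T_1}(T_1) = \vol_{C_1}(C_1) \geq \vol_{C_2}(C_2) > \vol_{T_2}(T_2)$, yielding a valid edge-transfer so $T$ is not minimal. Finally, in $S_W$ itself (assuming $|W| \geq 3$; otherwise $S_W$ is already a path and trivially minimal), any two adjacent edges share the center, so $T_1$ is a single leaf with zero volume while $T_2$ contains the center together with another leaf and has positive volume; no edge-transfer is available, confirming $S_W$ is minimal.

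The main obstacle I anticipate is the subtle volume bookkeeping: the WLOG assumption $\vol_{T'_1}(T'_1) \geq \vol_{T'_2}(T'_2)$ in the non-path step is only non-strict, and strictness in the resulting $\vol_{T_1}(T_1) > \vol_{T_2}(T_2)$ must come from the extra $2\omega(v u_1) > 0$ contributed by attaching $v$ to $T'_1$ via $v u_1$. The analogous strictness in the non-star step is supplied by the $2\omega(v_2 v_3)$ term when the edge $v_2 v_3$ is extracted from $C_2$. These are precisely the places where one has to check that replacing \emph{size} by \emph{volume} does not lose the strict inequality that Lemma~\ref{inequality_kemeny} demands.
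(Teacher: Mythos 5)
Your proof is correct and follows exactly the route the paper intends: it is the volume-based analog of the proof of Lemma~\ref{poset_aht}, which is precisely why the paper omits it. The places where you check that strictness survives the passage from size to volume (the $2\omega(vu_1)$ and $2\omega(v_2v_3)$ terms, and the $\vol_{T_1}(T_1)=0$ obstruction for paths and $S_W$) are the right details to verify, and they all check out.
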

The proof of this lemma is very similar to that of Lemma~\ref{poset_aht}.
%We skip the proof of Lemma~\ref{poset_kemeny} since it is very similar to the proof of Lemma~\ref{poset_aht}.
\begin{proof}
Suppose $T'\in \mathcal{T}_W$ is not a weighted path, then $T'$ has a vertex $v$ with at least three vertices $u_1,u_2,u_3$ adjacent to $v$. Let $T'_i$ be the component of $T'\setminus vu_i$ containing $u_i$, for $i=1,2,3$. Without loss of generality, we can assume $\vol_{T'_1}(T'_1)\geq \vol_{T'_2}(T'_2)$. Now, we construct another tree $T$ in $\mathcal{T}_W$ from $T'$ by deleting the edge $vu_3$ and adding the edge $u_2u_3$ with weight $\omega(u_2u_3):=\omega(vu_3)$. Then $T$ edge-transfers to $T'$ with respect to volume. Indeed, in $T$, if we name $v_1:=v$, $v_2:=u_2$, $v_3:=u_3$, $e_1:=v_1v_2$, $e_{2}:=v_2v_3$ and the component of $T\setminus\{e_1,e_2\}$ containing $v_i$ as $T_i$, we have $\vol_{T_1}(T_1)> \vol_{T'_1}(T'_1)\geq \vol_{T'_2}(T'_2)= \vol_{T_2}(T_2)$. So we can perform an edge-transfer on $T$, taking $e_1,e_2$ as the two adjacent edges, to obtain $T'$. By Lemma~\ref{inequality_aht}, $T$ and $T'$ are not isomorphic, so $T'$ is not a maximal element of $(\mathcal{T}_W,\succeq_s)$. On the other hand, since every vertex of a weighted path $P\in \mathcal{T}_W$ has at most two neighbors, $P$ cannot be a result of an edge-transfer, so $P$ is maximal with respect to $\succeq_s$.

Suppose $T\in \mathcal{T}_W$ is not $S_W$, then $T$ contains a path of length three which we can write as $v_0v_1v_2v_3$. Let $C_i$ be the component of $T\setminus v_1v_2$ containing $v_i$, for $i=1,2$. Without loss of generality, we can assume $\vol_{C_1}(C_1)\geq \vol_{C_2}(C_2)$. Then we can perform an edge-transfer with respect to size on $T$ to obtain another tree $T'$. Indeed, if we name $e_1:=v_1v_2$, $e_{2}:=v_2v_3$ and the component of $T\setminus\{e_1,e_2\}$ containing $v_i$ as $T_i$, we have $\vol_{T_1}(T_1)=\vol_{C_1}(C_1)\geq \vol_{C_2}(C_2)>\vol_{T_2}(T_2)$. By Lemma~\ref{inequality_aht}, $T$ and $T'$ are not isomorphic, so $T$ is not a minimal element of $(\mathcal{T}_W,\succeq_s)$. On the other hand, it is easy to check that $S_W$ is minimal with respect to $\succeq_s$.
\end{proof}

\begin{proof}[Proof of Theorem~\ref{minmax_kemeny}]
As a consequence of Lemma~\ref{inequality_kemeny} and Lemma~\ref{poset_kemeny}, the elements that maximize $\kappa$ are among the weighted paths in $\mathcal{T}_W$, and $S_W$ is the unique element that minimizes $\kappa$. Let $P=v_1v_2\dots v_n$, where $n=|W|+1$, be a weighted path and name the edges $e_i:=v_iv_{i+1}$ for $i=1,\dots,n-1$.
According to Lemma~\ref{forestformula_kemeny}, we can compute\begin{align*}
    \kappa(P)&=\frac{1}{\vol_P(P)\tau(P)}\sum_{i=1}^{n-1} V_P(P\setminus e_i)\omega(P\setminus e_i)\\
    &=\frac{1}{(\sum_{w\in W} 2w)(\prod_{w\in W} w)}\sum_{i=1}^{n-1}\left( (\sum_{j=1}^{i-1}2\omega(e_j)+\omega(e_i))(\omega(e_i)+\sum_{k=i+1}^{n-1}2\omega(e_k))\frac{\prod_{w\in W} w}{\omega(e_i)}\right)\\
    %&=\frac{1}{(\sum_{w\in W} 2w)(\prod_{w\in W} w)}\sum_{i=1}^{n-1}(\sum_{j=1}^{i-1}\sum_{k=i+1}^{n-1}4\omega(e_j)\omega(e_k)+ \omega(e_i)(\sum_{w\in W}2w-2\omega(e_i))+\omega(e_i)^2)\frac{\prod_{w\in W} w}{\omega(e_i)}\\
    &=\frac{2}{\sum_{w\in W} w}\sum_{i=1}^{n-1}\sum_{j=1}^{i-1}\sum_{k=i+1}^{n-1}\frac{\omega(e_j)\omega(e_k)}{\omega(e_i)}+n-\frac{1}{2}.
\end{align*}Hence the maximization of $\kappa$ is equivalent to the maximization of $\sum_{i=1}^{n-1}\sum_{j=1}^{i-1}\sum_{k=i+1}^{n-1}\frac{\omega(e_j)\omega(e_k)}{\omega(e_i)}$ as claimed.
\end{proof}

\section{Remarks}\label{section_remark}

\noindent 1. Theorem~\ref{minmax_aht} and Theorem~\ref{minmax_kemeny} can be interpreted as extremal characterizations of the Laplacian spectra for weighted trees in $\mathcal{T}_W$. This is because $\alpha$ and $\kappa$ satisfy the following spectral formulas: For a connected weighted graph $G$ of size $n$, write the eigenvalues of its \emph{combinatorial Laplacian matrix} as $0=\lambda_0<\lambda_1\leq \dots\leq \lambda_{n-1}$, and the eigenvalues of its \emph{normalized Laplacian matrix} as $0=\mu_0<\mu_1\leq \dots\leq \mu_{n-1}$, then we have
   \[\frac{n}{\vol_G(G)}\alpha(G)=\sum_{i=1}^{n-1} \lambda_i^{-1}\text{\qquad and \qquad} \kappa(G)=\sum_{i=1}^{n-1} \mu_i^{-1}.\]
We refer our readers to \cite{chung2021forest} (see also \cite{chebotarev2007graph,chebotarev2020hitting,levene2002kemeny}) for definitions of Laplacian matrices and proofs of these identities.

\medskip

\noindent 2. The two partial orders $\succeq_s$ and $\succeq_v$ are identical for unweighted trees. Indeed, if $T$ is unweighted in the definitions of $\succeq_s$ and $\succeq_v$, we have $\vol_{T_1}(T_1)=2|T_1|-1$ and $\vol_{T_2}(T_2)=2|T_2|$ by handshaking lemma, so $|T_1|>|T_2|$ if and only if $\vol_{T_1}(T_1)>\vol_{T_2}(T_2)$. Hence, we can write $\succeq:=\succeq_s=\succeq_v$ if we only consider unweighted trees. Figure~\ref{fig:hasse} illustrates the Hasse diagram of $\succeq$ on unweighted trees of size $7$.

\begin{figure}[ht]
    \centering
    \includegraphics[scale=1.5]{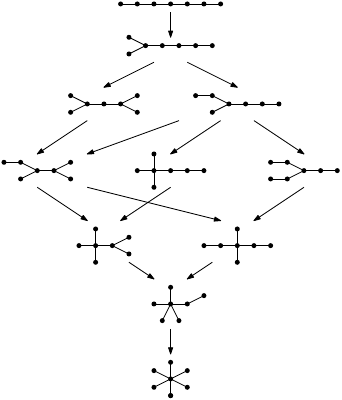}
    \caption{Hasse diagram of the partial order $\succeq$ on unweighted trees of size $7$.}
    \label{fig:hasse}
\end{figure}

Sidorenko \cite{sidorenko1994partially} introduced a partial order $\succcurlyeq$ before: For two unweighted trees $T$ and $T'$, we write $T \succcurlyeq T'$ if and only if $\text{hom}(T,G)\geq \text{hom}(T',G)$ for any unweighted graph $G$. Here, $\text{hom}(T,G)$ is the number of graph homomorphisms from $T$ into $G$. The Hasse diagrams of $\succcurlyeq$ on unweighted trees with sizes $6$ and $7$ are determined in \cite{sidorenko1994partially}. For unweighted trees of size at most $8$, we observed that there do not exist distinct trees $T$ and $T'$ satisfying both $T\succcurlyeq T'$ and $T\succeq T'$. In fact, for two trees $T$ and $T'$ with $|T|=|T'|\leq 8$, if $T\succcurlyeq T'$, then $\alpha(T')\geq \alpha(T)$. We wonder if these properties are true in general for trees of arbitrary sizes.

\medskip

\noindent {\bf Acknowledgement.} I wish to thank Fan Chung and Alexander Sidorenko for valuable discussions, and the anonymous referees of \emph{Discrete Applied Mathematics} for helpful comments.
\bibliographystyle{plain}
\bibliography{bib}
\end{document}